\newtheorem{thm}{Theorem}[section]
\newtheorem{prop}[thm]{Proposition}
\newtheorem{defn}[thm]{Definition}
\numberwithin{equation}{section}
\newcounter{saveeqn}
\newcommand{\cvd}{\hfill$\square$}
\newenvironment{proof}[1][Proof.]{\noindent\textbf{#1} }{\cvd}
\newcommand{\eqnref}[1]{(\ref {#1})}
\newcommand{\ga}{\gamma}
\newcommand{\Om}{\Omega}
\newcommand{\p}{\partial}
\newcommand{\Lcal}{\mathcal{L}}
\newcommand{\ve}{\varepsilon}
\newcommand{\vp}{\varphi}
\newcommand{\R}{\mathbb{R}}
\begin{document}

\title{Single-logarithmic stability \\ for the Calder\'{o}n problem with local data \thanks{Work supported in part by PRIN 20089PWTPS and by the JSPS International Training Program (ITP).}}
\author{Giovanni Alessandrini, \thanks{Dipartimento di Matematica e Geoscienze,
Universit\`a degli Studi di Trieste, via Valerio 12/1, 34127
Trieste, Italy. E-mail: \textsf{alessang@units.it}} \ Kyoungsun Kim \thanks{Department of Mathematics, Inha University, Incheon 402-751, Korea. E-mail: \textsf{kskim@inha.ac.kr}}}
\date{}
\maketitle

\begin{abstract} We prove an optimal stability estimate for Electrical Impedance Tomography with local data, in the case when the conductivity is precisely known on a neighborhood of the boundary.  The main novelty here is that we provide a rather general method which enables to obtain the
H\"older dependence of a global Dirichlet to Neumann
map from a local one on a larger domain when, in the layer between
the two boundaries, the coefficient is known.
\end{abstract}

\section{Introduction}

In this paper we consider the stability issue for Electrical Impedance Tomography (or, as is the same, the Calder\'{o}n problem) in the case of local boundary data. As a consequence of the recent results of uniqueness \cite{BU, KSU, IUY, I}, there has also been a number of results of stability \cite{HW1, HW2, F}, either directly associated to the conductivity equation setting, or associated to the Schr\"odinger equation setting. In general, the problem of the optimal rate of stability it is not yet settled.

It is expected however that, if the unknown coefficient (either the conductivity or the potential) is a-priori given on a neighborhood of the boundary then the rate of stability is of logarithmic type with a \emph{single} logarithm, which is optimal, in view of the well-known examples by Mandache \cite{M}. This is in fact the result of Fathallah \cite{F}, which along the lines of previous uniqueness results by Lassas, Cheney and Uhlmann \cite{LCU} and Ammari and Uhlmann \cite{AU}, proves a stability result with a single logarithm in the setting of the Schr\"odinger equation.

Here we consider the same situation directly for the conductivity equation, see Theorem \ref{main} in the next section, with the principal aim of providing a method of proof which might be adapted to other inverse boundary problems with local data and in which the unknown parameters, or objects (such as cavities, inclusions or cracks) are a-priori known to be located at a positive distance from the boundary.

The main idea in this method is that, assuming that the unknown part of a coefficient $\ga$ is concentrated in a subset $D\subset \subset \Om$, and if we fix a bigger domain $\widetilde D$ such that $D\subset \subset \widetilde D\subset \subset\Om$, then the full Dirichlet to Neumann map $\Lambda_{\ga}^{\p\widetilde D}$associated to $\widetilde D$ can be determined by the local Dirichlet to Neumann map $\Lambda_{\ga}^{\Sigma}$ associated to a portion $\Sigma$ of $\p\Om$. In fact such a dependence has a H\"older rate of stability. This is the essence of Theorem \ref{sub-main} below.

In Section \ref{prel} we formulate that main assumptions and state the main Theorem \ref{main}. Section \ref{proofmain} starts with some geometrical construction needed for the precise formulation of Theorem \ref{sub-main} which, in combination with the standard stability estimate with the full Dirichlet to Neumann map Theorem \ref{A}, enables a proof of Theorem \ref{main}. The final Section \ref{core} is devoted to the proof of Theorem \ref{sub-main}, this is mainly based on the use of singular solutions and on estimates of propagation of smallness for solutions of elliptic equations, some ideas are borrowed from previous work in \cite{ADC, AV, DiC}.

\section{Notation and main assumptions} \label{prel}
Let us introduce some notation and definitions.

Given $x\in \R^n$, $n\geq 3$, we shall denote $x=(x',x_n)$, where $x'=(x_1,\ldots,x_{n-1})\in\R^{n-1}$, $x_n\in\R$.
Given $x\in \R^n$, $r>0$, we shall use the following notation for balls and cylinders.
\begin{equation*}
   B_r(x)=\{y\in \R^n\ |\ |y-x|<r\}\ , \quad  B_r=B_r(0) \ ,
\end{equation*}
\begin{equation*}
   B'_r(x')=\{y'\in \R^{n-1}\ |\ |y'-x'|<r\}\ , \quad  B'_r=B'_r(0)\ ,
\end{equation*}
\begin{equation*}
   \Gamma_{a,b}(x)=\{y=(y',y_n)\in \R^n\ |\ |y'-x'|<a, |y_n-x_n|<b\}\ , \quad \Gamma_{a,b}=\Gamma_{a,b}(0) \ .
\end{equation*}
We shall denote by $\Omega$ a bounded
open connected subset of $\mathbb{R}^{n}$.
We shall assume that
its boundary is Lipschitz according to the following definition.
\begin{defn}
  \label{def:Lipschitz_boundary}
We say that the boundary of
$\Omega$ is of \emph{Lipschitz class} with
constants $\rho_{0}$, $M_{0}>0$, if, for any $P \in \partial\Omega$, there exists
a rigid transformation of coordinates under which $P=0$
and
\begin{equation}
   \label{bordo_lip1}
  \Omega \cap \Gamma_{\frac{\rho_{0}}{M_0},\rho_0}(P)=\{x=(x',x_n) \in \Gamma_{\frac{\rho_{0}}{M_0},\rho_0}\quad | \quad
x_{n}>Z(x')
  \} \ ,
\end{equation}
where $Z:B'_{\frac{\rho_{0}}{M_0}}\to\R$ is a Lipschitz function satisfying
\begin{equation}
   \label{bordo_lip2}
Z(0)=0\ ,
\end{equation}
\begin{equation}
   \label{bordo_lip3}
\|Z\|_{{C}^{0,1}\left(B'_{\frac{\rho_{0}}{M_0}}\right)} \leq M_{0}\rho_{0} \  .
\end{equation}
\end{defn}



We shall consider an open subset $\Sigma$ of $\partial\Omega$ which is accessible to measurements. We shall require a lower bound on its smallness as follows.
\begin{defn}
  \label{def:size_Sigma}
We shall say that $\Sigma$ has \emph{size at least} $d_0$, $0<d_0\leq\rho_0$, if there
exists at least one point $P\in\Sigma$ such that
\begin{equation}
   \label{r(P)_big}
  \mathrm{dist}(P,\partial\Omega\setminus \Sigma)\geq d_0 \ .
\end{equation}
\end{defn}
Consider a conductivity coefficient $\ga$ defined in
 $\Om$ and let us assume it is a bounded measurable function which
satisfies the ellipticity condition
\begin{equation}\label{ellipticity}
 \lambda < \ga(x) < \lambda^{-1} \quad \mbox{ for all  } x\in
\Om \ ,\end{equation} for a given positive constant $\lambda$.

\begin{defn} \label{def: local spaces}
We define the space of localized Dirichlet data as follows
\begin{equation*}
H_{co}^{1/2}(\Sigma)=\{\vp\in H^{1/2}(\p\Om)\ | \
\mathrm{supp}\vp\subset\Sigma\}\end{equation*}
and we denote by $H_{co}^{-1/2}(\Sigma)$
its topological dual space. We denote with $\langle \cdot, \cdot\rangle$ the dual pairing between these two spaces, based on the standard $L^2(\p\Om)$ inner product.
We shall denote by
$\|\cdot\|_{\Lcal(H_{co}^{1/2}(\Sigma),H_{co}^{-1/2}(\Sigma))}$ the operator norm on the space of bounded linear operators from $H_{co}^{1/2}(\Sigma)$ into $H_{co}^{-1/2}(\Sigma)$.
\end{defn}

Now, for each $\vp \in H_{co}^{1/2}(\Sigma)$, consider  the weak solution  $u\in H^1(\Om)$ to the Dirichlet problem

\begin{equation}\label{DP}\begin{cases}
\mbox{div}(\ga\nabla u)=0\quad \mbox{in  } \Om  \ ,\\
u|_{\p\Om}= \vp \quad \mbox{on  } \p\Om \ .\end{cases}\end{equation}

We introduce the local Dirichlet to Neumann map $\Lambda_{\ga}^{\Sigma}$
as the map which associates to the Dirichlet data $\vp \in H_{co}^{1/2}(\Sigma)$ the boundary co-normal derivative $\ga\frac{\p u}{\p\nu}\Big|_{\Sigma}$, where
$\nu$ is the outward unit normal to $\p\Om$. More precisely, we introduce the following definition.

\begin{defn}\label{def: local map}
 The map
 \begin{equation*}
\Lambda_{\ga}^{\Sigma}\ :\ H_{co}^{1/2}(\Sigma)  \rightarrow
H_{co}^{-1/2}(\Sigma)
\end{equation*}
is the operator characterized by
\begin{equation}\label{bilinloc}
\langle \Lambda_{\ga}^{\Sigma}\vp, \psi\rangle = \int_{\Om}\ga\nabla
u\cdot\nabla v,\quad \mbox{for every  }\vp, \psi\in
H_{co}^{1/2}(\Sigma)\ ,\end{equation}
where $u$ is the solution to the Dirichlet
problem \eqnref{DP} and $v$ is any function in $H^1(\Om)$ such that
$v|_{\p\Om}=\psi$.
\end{defn}
We shall consider an open subset $D$ of $\Omega$ whose boundary is also Lipschitz with constants $\rho_0, M_0$ and which is at a given positive distance from $\partial \Omega$, namely we assume
\begin{equation} \label{eq: distD}
\mathrm{dist}(D, \partial \Omega) \geq \rho_0 \ .
\end{equation}
On the unknown conductivity $\gamma$ we shall assume the following a-priori regularity bound
\begin{equation}\label{bound}
\|\ga\|_{W^{2,\infty}(\Om)} \leq E\ .\end{equation}
and also that it is precisely known outside $D$. That is, we assume that we are given a reference conductivity $\gamma_0$ which satisfies \eqref{ellipticity} and \eqref{bound} and the unknown $\gamma$ satisfies
\begin{equation}\label{gammaknown}
\gamma = \gamma_0 \mbox{ in  } \Omega \setminus \overline{D}\ .
\end{equation}
\begin{thm}\label{main}
Let $\Omega, \Sigma$ and $D$ satisfy the above stated assumptions. If $\gamma_1, \gamma_2$ satisfy \eqref{ellipticity}, \eqref{bound} and \eqref{gammaknown}, then we have
\begin{equation*}
\|\ga_1-\ga_2\|_{L^{\infty}(\Om)}\leq \omega \left(
\|\Lambda_{\ga_1}^{\Sigma}-\Lambda_{\ga_2}^{\Sigma}\|_{\Lcal(H_{co}^{1/2}(\Sigma),H_{co}^{-1/2}(\Sigma))}\right)
\ ,\end{equation*}
where $\omega(t)$ is an increasing function of $t\geq 0$ such that
\begin{equation}\label{log}
\omega(t) \leq C |\log t|^{-\delta} \mbox{for every  } 0<t<e^{-1} \ ,
\end{equation}
here $C>0$ only depends on the a-priori data $\lambda, E, \rho_0, M_0, d_0, \mathrm{diam}(\Omega)$ and on $n$, whereas $\delta\in(0,1)$ only depends on $n$.\end{thm}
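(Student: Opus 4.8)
The plan is to interpose between the accessible portion $\Sigma\subset\partial\Omega$ and the inclusion $D$ an auxiliary domain $\widetilde D$ and to concatenate the two stability mechanisms available to us: the H\"older-rate propagation of the boundary map from $\Sigma$ to $\partial\widetilde D$, which is the content of Theorem \ref{sub-main} and is available precisely because $\gamma_1=\gamma_2=\gamma_0$ is known on the shell $\Omega\setminus\overline D$; and the classical single-logarithmic stability for the Calder\'on problem with \emph{full} data on $\widetilde D$, Theorem \ref{A}. First I would fix once and for all, using the a-priori data alone --- in particular $\mathrm{dist}(D,\partial\Omega)\geq\rho_0$ and the Lipschitz character of $\partial D$ --- a domain $\widetilde D$ with $\overline D\subset\widetilde D\subset\overline{\widetilde D}\subset\Omega$, with $\mathrm{dist}(D,\partial\widetilde D)$ and $\mathrm{dist}(\widetilde D,\partial\Omega)$ bounded below by fixed positive constants, and with $\partial\widetilde D$ of controlled regularity (for instance a suitably mollified $(\rho_0/3)$-neighborhood of $D$). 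The point of this normalization is that every constant subsequently produced by Theorems \ref{sub-main} and \ref{A} on $\widetilde D$ will then depend only on $\lambda,E,\rho_0,M_0,d_0,\mathrm{diam}(\Omega)$ and $n$.

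Since each $\gamma_j$ coincides with the known reference $\gamma_0$ on $\Omega\setminus\widetilde D$, Theorem \ref{sub-main} applies to $\gamma_1$ and $\gamma_2$ separately, and on taking the difference it yields
\begin{equation*}
\|\Lambda_{\gamma_1}^{\partial\widetilde D}-\Lambda_{\gamma_2}^{\partial\widetilde D}\|_{\Lcal(H^{1/2}(\partial\widetilde D),H^{-1/2}(\partial\widetilde D))}\leq C\,\|\Lambda_{\gamma_1}^{\Sigma}-\Lambda_{\gamma_2}^{\Sigma}\|_{\Lcal(H_{co}^{1/2}(\Sigma),H_{co}^{-1/2}(\Sigma))}^{\,\alpha}
\end{equation*}
for some $\alpha\in(0,1)$ depending only on the a-priori data. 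On $\widetilde D$ both conductivities satisfy \eqref{ellipticity} and \eqref{bound}, so Theorem \ref{A} gives $\|\gamma_1-\gamma_2\|_{L^\infty(\widetilde D)}\leq\omega_0\big(\|\Lambda_{\gamma_1}^{\partial\widetilde D}-\Lambda_{\gamma_2}^{\partial\widetilde D}\|\big)$ with $\omega_0(s)\leq C|\log s|^{-\delta}$ for small $s$ and $\delta\in(0,1)$ depending only on $n$. Finally, because $\gamma_1\equiv\gamma_2$ outside $\overline D\subset\widetilde D$, we have $\|\gamma_1-\gamma_2\|_{L^\infty(\Omega)}=\|\gamma_1-\gamma_2\|_{L^\infty(\widetilde D)}$; composing the two displayed estimates and noting that a single-logarithmic modulus precomposed with a H\"older map $t\mapsto Ct^\alpha$ is again single-logarithmic with the \emph{same} exponent (since $|\alpha\log t+\log C|\geq\tfrac{\alpha}{2}|\log t|$ for $t$ small, the remaining values of $t$ being absorbed into the constant because $\omega$ is increasing and bounded there) yields the asserted bound $\omega(t)\leq C|\log t|^{-\delta}$.

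In this scheme the proof of Theorem \ref{main} is a short assembly; the real work lies in Theorem \ref{sub-main} (Section \ref{core}) --- the H\"older-stable recovery of $\Lambda_{\gamma}^{\partial\widetilde D}$ from $\Lambda_{\gamma}^{\Sigma}$ across the known shell, via singular solutions and quantitative propagation of smallness --- and in the black-box input Theorem \ref{A}. The only delicate point internal to the assembly is the geometric normalization of the first step: one must make sure that the choice of $\widetilde D$, and hence the exponents $\alpha$, $\delta$ and the constant $C$, are dictated by the a-priori class alone and not by the individual conductivities $\gamma_j$.
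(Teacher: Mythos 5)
Your proposal is correct and follows essentially the same route as the paper: fix $\widetilde D$ with controlled geometry depending only on the a-priori data, apply Theorem \ref{sub-main} to pass with H\"older rate from $\Lambda_{\gamma}^{\Sigma}$ to $\Lambda_{\gamma}^{\partial\widetilde D}$, then invoke Theorem \ref{A} on $\widetilde D$ and use that $\gamma_1=\gamma_2$ outside $\overline D$, with the large-$\varepsilon$ regime absorbed into the constant via the trivial bound $\|\gamma_1-\gamma_2\|_{L^\infty}\leq\lambda^{-1}$. The only cosmetic difference is that Theorem \ref{sub-main} already bounds the difference of the two maps directly, so there is no need to apply it ``to $\gamma_1$ and $\gamma_2$ separately and take the difference.''
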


\section{Proof of Theorem \ref{main}} \label{proofmain}
Before formulating the main new tool (Theorem \ref{sub-main} below) that we shall use for the proof of Theorem \ref{main}, we need to introduce some geometrical constructions. We use notation and some results described in \cite{IP}.

First we introduce an augmented domain $\widetilde\Om$ by attaching to $\Omega$ an open set $\mathcal{A}$ lying in its exterior and whose boundary intersects $\partial \Omega$ on an open portion $\Sigma_0 \subset\subset \Sigma$. We refer to \cite[Section 6]{IP} for details.  In particular, we can choose $\mathcal{A}$ in such a way that, setting $\widetilde\Om = \Om \cup \Sigma_0 \cup \mathcal{A}$,  the following properties hold.

There exist $\rho_1, M_1>0$, only depending on $\rho_0, M_0, d_0$, such that
\begin{enumerate}[(i)]
\item $\widetilde\Om$ is open, connected and has Lipschitz boundary with constants $\rho_1, M_1>0$,
\item There exists $Q\in \mathcal{A}$ such that
$$
B_{2\rho_1}(Q) \subset \mathcal{A}  \ .
$$
\end{enumerate}
Next, if we denote, for any open set $E\in \mathbb{R}^n$ and $h>0$,
$$
E_h= \left\{x\in E | \mathrm{dist}(x, \partial E)>h \right\}
$$
we observe that there exists $h_0>0$ only depending on $\rho_0, M_0, d_0$ such that $\widetilde\Om_{h}$ is connected for every $h\leq h_0$, see for instance \cite[Proposition 5.5]{IP}. Note that $B_{\rho_1}(Q) \subset \widetilde\Om_{h}$ if $h\leq \rho_1$.

We introduce two domains $D^{\prime}, \widetilde{D}$ nested as follows
$$
D\subset \subset D^{\prime} \subset \subset \widetilde{D} \subset \subset \Om \ .
$$
Such domains can be chosen in such a way that for suitable $\rho_2, M_2>0$, only depending on $\rho_0, M_0, d_0$,  we have
\begin{enumerate}[(i)]
\item $\widetilde{\Om} \setminus \overline{D^{\prime}}$ and $\widetilde{\Om} \setminus \overline{\widetilde{D}}$ are connected,
\item $D^{\prime}, \widetilde{D}$ have $C^2$ boundaries, satisfying a Lipschitz condition with constants $\rho_2, M_2$,
\item the boundaries of $D, D^{\prime},  \widetilde{D},  \widetilde\Om_{\rho_2}$ have mutual distance greater than $\rho_2$.
\end{enumerate}
Let us incidentally note that the set $\widetilde{D}$ shall be used right away in the following statement, Theorem \ref{sub-main}, whereas the introduction of $D^{\prime}$ shall be justified during the proof of the same Theorem \ref{sub-main}.

Now we introduce the \emph{usual} Dirichlet to Neumann map $\Lambda_{\ga}^{\p\widetilde D}$ associated to the domain
$\widetilde{D}$. Namely, for a conductivity coefficient $\ga$ satisfying \eqref{ellipticity}, we consider, for any $\eta\in H^{1/2}(\p\widetilde D)$,
the solution $v\in H^{1}(\widetilde D)$ to the Dirichlet problem
\begin{equation}\label{v}\begin{cases}
\mbox{div}(\ga\nabla v)=0 \quad \mbox{in  }\widetilde D\ ,\\
v=\eta \quad \mbox{on  } \p\widetilde D\ ,\end{cases}\end{equation}
and we define
\begin{equation}\label{inner}
\Lambda_{\ga}^{\p\widetilde D}(\eta)=\ga\frac{\p
v}{\p\nu}\Big|_{\p\widetilde D}\ ,\end{equation} where
$\nu$ is the outward unit normal to $\p\widetilde D$. Again,
$\Lambda_{\ga}^{\p\widetilde D}$ is identified through the bilinear form characterization
\begin{equation}\label{bilinear}
\langle \Lambda_{\ga}^{\p\widetilde D}\eta, \xi\rangle =\int_{\widetilde
D}\ga\nabla v \cdot \nabla w\ , \quad \mbox{for every  }
\eta\ , \xi \in H^{1/2}(\p\widetilde D)\ ,\end{equation}
where $v$ is the solution to \eqnref{v} and $w$
is any function in $H^1(\widetilde D)$ such that $w|_{\p\widetilde D}=\xi$.

\begin{thm}\label{sub-main}
Let $\Omega, \Sigma$, $D$ and $\widetilde{D}$ satisfy the above stated assumptions. If $\gamma_1, \gamma_2$ satisfy \eqref{ellipticity}, \eqref{bound} and \eqref{gammaknown}, then we have
\begin{equation}\label{est-main}
\|\Lambda_{\ga_1}^{\p\widetilde D}-\Lambda_{\ga_2}^{\p\widetilde
D}\|_{\Lcal(H^{1/2}(\p\widetilde D), H^{-1/2}(\p\widetilde D))}\leq
C\|\Lambda_{\ga_1}^{\Sigma}-\Lambda_{\ga_2}^{\Sigma}\|^{\beta}_{\Lcal(H_{co}^{1/2}(\Sigma),
H_{co}^{-1/2}(\Sigma))}\ ,\end{equation}
where $ C>0, \beta\in(0,1)$ only depend on $\lambda, E, \rho_0, M_0, d_0, \mathrm{diam}(\Omega)$ and on $n$.\end{thm}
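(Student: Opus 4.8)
The plan is to prove \eqref{est-main} by working with a single family of \emph{singular solutions} adapted to the augmented domain $\widetilde\Om$, transporting a smallness bound for them from the accessible portion $\Sigma\subset\p\Om$ to a neighbourhood of $\p D^{\prime}$ by \emph{propagation of smallness}, and then recovering $\Lambda_{\ga}^{\p\widetilde D}$ by a boundary integral representation on $\p D^{\prime}$. First I would set up the singular solutions. Extend both $\ga_1,\ga_2$ to $\widetilde\Om=\Om\cup\Sigma_0\cup\mathcal A$ by their common value $\ga_0$ — they already agree with $\ga_0$ near $\Sigma_0$ inside $\Om$, and $\ga_0$ can be extended to $\mathcal A$ keeping \eqref{ellipticity} and \eqref{bound} up to constants depending on the a-priori data — so that $\ga_1=\ga_2=\ga_0$ on $\widetilde\Om\setminus\overline D$. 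For a pole $y\in B_{\rho_1}(Q)\subset\mathcal A$, let $G_i^{\widetilde\Om}(\cdot,y)$ be the Green function of $\divrg(\ga_i\nabla\,\cdot\,)$ on $\widetilde\Om$ with homogeneous Dirichlet datum on $\p\widetilde\Om$, and set
\[
F(y,z)=\int_D(\ga_1-\ga_2)(x)\,\nabla_xG_1^{\widetilde\Om}(x,y)\cdot\nabla_xG_2^{\widetilde\Om}(x,z)\,\rmd x\ .
\]
By standard Green-function bounds, $F$ is well defined and bounded by a constant depending only on the a-priori data whenever $y,z$ stay at a fixed positive distance from $\overline D$. Since $G_i^{\widetilde\Om}(\cdot,y)$ vanishes on $\p\widetilde\Om\supset\p\Om\setminus\overline{\Sigma_0}$ and $\overline{\Sigma_0}\subset\subset\Sigma$, its trace $\vp_i^y$ on $\p\Om$ lies in $H_{co}^{1/2}(\Sigma)$, with norm controlled by the a-priori data because the pole is bounded away from $\overline\Om$, and $G_i^{\widetilde\Om}(\cdot,y)|_\Om$ is exactly the solution of \eqref{DP} with datum $\vp_i^y$. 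Hence, by the standard identity $\langle(\Lambda_{\ga_1}^{\Sigma}-\Lambda_{\ga_2}^{\Sigma})\vp,\psi\rangle=\int_\Om(\ga_1-\ga_2)\nabla u_1\cdot\nabla u_2$ ($u_j$ solving \eqref{DP} for $\ga_j$), which follows from \eqref{bilinloc}, \eqref{gammaknown} and the symmetry of the bilinear forms, one gets $F(y,z)=\langle(\Lambda_{\ga_1}^{\Sigma}-\Lambda_{\ga_2}^{\Sigma})\vp_1^y,\vp_2^z\rangle$, so that
\[
|F(y,z)|\leq C\,\ve\qquad\text{for all }y,z\in B_{\rho_1}(Q),\qquad \ve:=\|\Lambda_{\ga_1}^{\Sigma}-\Lambda_{\ga_2}^{\Sigma}\|_{\Lcal(H_{co}^{1/2}(\Sigma),H_{co}^{-1/2}(\Sigma))}\ .
\]

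Next I would propagate this smallness. Because $\ga_1=\ga_2=\ga_0$ off $\overline D$ and $x\mapsto G_i^{\widetilde\Om}(x,w)$ solves $\divrg_x(\ga_i\nabla_x\,\cdot\,)=0$ for $x\neq w$, differentiating under the integral sign shows that, for each fixed $z$, $y\mapsto F(y,z)$ is a weak solution of $\divrg_y(\ga_0\nabla_y\,\cdot\,)=0$ in $\widetilde\Om\setminus\overline D$, and symmetrically in $z$. Applying the three-spheres inequality for this equation — its exponent depending only on $n,\lambda,E$ for a fixed ratio of radii of order $\rho_2$ — iterated along a chain of balls, whose number is controlled by the a-priori data, lying at distance $\geq\rho_2/2$ from $\overline D$ and from $\p\widetilde\Om$ and joining $B_{\rho_1}(Q)$ to a neighbourhood of $\p D^{\prime}$ (such a connected corridor being provided by the geometric construction recalled from \cite{IP}), together with the a-priori bound $|F|\leq C$, I would obtain $|F(y,z)|\leq C\,\ve^{\theta}$ — first in $y$ for $z\in B_{\rho_1}(Q)$, then, iterating in $z$, for all $y,z$ in a neighbourhood of $\p D^{\prime}$ — with $\theta\in(0,1)$ depending only on the a-priori data. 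Interior elliptic estimates, which use \eqref{bound}, upgrade this to the same bound for $\nabla_yF$, $\nabla_zF$ and $\nabla_y\nabla_zF$ on $\p D^{\prime}\times\p D^{\prime}$.

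Finally I would read off $\Lambda_{\ga}^{\p\widetilde D}$. Given Dirichlet data $\eta,\xi\in H^{1/2}(\p\widetilde D)$ of unit norm, let $v_i\in H^1(\widetilde D)$ solve $\divrg(\ga_i\nabla v_i)=0$ in $\widetilde D$ with traces $\eta$ and $\xi$; the identity associated with \eqref{bilinear}, together with \eqref{gammaknown}, gives
\[
\big\langle\big(\Lambda_{\ga_1}^{\p\widetilde D}-\Lambda_{\ga_2}^{\p\widetilde D}\big)\eta,\xi\big\rangle=\int_D(\ga_1-\ga_2)\,\nabla v_1\cdot\nabla v_2\ .
\]
Since $\overline D\subset\subset D^{\prime}\subset\subset\widetilde D$, $\p D^{\prime}$ is of class $C^2$, and $G_i^{\widetilde\Om}(x,\cdot)$ restricted to $D^{\prime}$ is a fundamental solution of $\divrg(\ga_i\nabla\,\cdot\,)$ with pole at $x\in D$, Green's representation formula on $D^{\prime}$ expresses $\nabla v_i$ on $D$ through the Cauchy data of $v_i$ on $\p D^{\prime}$ and the $x$-gradients of $G_i^{\widetilde\Om}$; inserting this, the right-hand side above becomes a double integral over $\p D^{\prime}\times\p D^{\prime}$ with kernel a fixed linear combination of $F$, $\p_{\nu_y}F$, $\p_{\nu_z}F$ and $\p_{\nu_y}\p_{\nu_z}F$ evaluated on $\p D^{\prime}\times\p D^{\prime}$. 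As interior estimates in $\widetilde D$ bound the Cauchy data of $v_i$ on $\p D^{\prime}$ by $C\|\eta\|_{H^{1/2}(\p\widetilde D)}$, respectively $C\|\xi\|_{H^{1/2}(\p\widetilde D)}$, the double integral is at most $C\,\ve^{\theta}$; taking the supremum over $\eta$ and $\xi$ yields \eqref{est-main} with $\beta=\theta$. This also explains the role of the intermediate domain $D^{\prime}$: it must lie far enough inside $\widetilde D$ for $v_i$ to be controlled near $\p D^{\prime}$, and far enough outside $D$ for $F$ and its first and second derivatives to be controlled there.

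The step I expect to be the main obstacle is the quantitative propagation of smallness: one must ensure that the length of the chain of balls, the three-spheres exponent, and hence $\beta$, depend only on $\lambda,E,\rho_0,M_0,d_0,\mathrm{diam}(\Om)$ and $n$. This rests on quantitative unique continuation (Carleman estimates, or three-spheres inequalities) for conductivities of class $W^{2,\infty}$, and on the connectedness properties of the shrunk domains $\widetilde\Om_h$ and of the nested family $D,D^{\prime},\widetilde D$ recalled from \cite{IP}, in the spirit of \cite{ADC,AV,DiC}. A subsidiary, more routine burden is the package of uniform bounds for $G_i^{\widetilde\Om}$ and $\nabla_xG_i^{\widetilde\Om}$ on sets away from the pole, and the passage from an interior $L^\infty$-bound on $F$ to bounds on its first and second derivatives; both use the $W^{2,\infty}$ regularity of the conductivities.
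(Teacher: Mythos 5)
Your proposal follows essentially the same route as the paper: the same kernel $S(z,w)$ built from the Green's functions of the augmented domain $\widetilde\Om$ with poles in the attached set $\mathcal{A}$, the same identification of $S$ with the local Dirichlet-to-Neumann pairing giving $|S|\leq C\ve$ for poles in $B_{\rho_1}(Q)$, the same propagation of smallness plus interior elliptic estimates to control $S$, $\nabla_z S$, $\nabla_w S$, $\nabla_z\nabla_w S$, and the same Green-representation reassembly of $\Lambda_{\ga_1}^{\p\widetilde D}-\Lambda_{\ga_2}^{\p\widetilde D}$ as a double boundary integral of these quantities. The only (harmless) deviation is that you place the representation surface at $\p D^{\prime}$ rather than at $\p\widetilde D$, which lets you bound the Cauchy data of $v_i$ there by interior estimates instead of pairing with $\Lambda_{\ga_i}^{\p\widetilde D}\eta_i$ and invoking density of $C^{1,\alpha}$ data as the paper does.
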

The proof of this theorem is the content of the next section. The other main ingredient for the proof of Theorem \ref{main} is the following known stability result for the Calder\'{o}n problem with full boundary data, see \cite{asing} and also, for details, \cite{A}.
\begin{thm}\label{A} Let $\widetilde{D}$ be as above.
Suppose that $\ga_1, \ga_2$ satisfy \eqref{ellipticity} and
\eqref{bound}. We have
the following stability estimate
\begin{equation}\label{AA}
\|\ga_1-\ga_2\|_{L^{\infty}(\widetilde D)}\leq
\omega(\|\Lambda_{\ga_1}^{\p\widetilde D}-\Lambda_{\ga_2}^{\p\widetilde
D}\|_{\Lcal(H^{1/2}(\p\widetilde D),H^{-1/2}(\p\tilde
D))})\ ,\end{equation}
where   $\omega$ is a logarithmic modulus of continuity satisfying
\begin{equation*}
\omega(t) \leq C |\log t|^{-\delta} \mbox{for every  } 0<t<e^{-1} \ ,
\end{equation*}
and $C>0$ only depends on the a-priori data $\lambda, E, \rho_0, M_0, d_0, \mathrm{diam}(\Omega)$ and on $n$, whereas $\delta\in(0,1)$ only depends on $n$.\end{thm}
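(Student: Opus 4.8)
\medskip\noindent\textbf{Proof proposal for Theorem \ref{A}.} This is the classical global stability estimate for the Calder\'on problem (\cite{asing}, with a detailed account in \cite{A}), and the plan is to run the complex-geometrical-optics (CGO) scheme while tracking all constants; below $C>0$ and $\sigma,\delta\in(0,1)$ denote constants depending only on $\lambda,E,\rho_0,M_0,d_0,\mathrm{diam}(\Om),n$ (with $\delta$ depending only on $n$), possibly changing from line to line, and $\ve:=\|\Lambda_{\ga_1}^{\p\widetilde D}-\Lambda_{\ga_2}^{\p\widetilde D}\|_{\Lcal(H^{1/2}(\p\widetilde D),H^{-1/2}(\p\widetilde D))}$. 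First I would establish the H\"older stable determination of the boundary values,
\begin{equation*}
\|\ga_1-\ga_2\|_{L^{\infty}(\p\widetilde D)}+\|\p_\nu\ga_1-\p_\nu\ga_2\|_{L^{\infty}(\p\widetilde D)}\le C\,\ve^{\sigma},
\end{equation*}
by a quantitative Kohn--Vogelius argument (testing $\Lambda_{\ga_i}^{\p\widetilde D}$ against Dirichlet data concentrated near a point of the $C^2$ surface $\p\widetilde D$ and comparing energies, using $\ga_i\in W^{2,\infty}$). Next, put $q_i:=\Delta\sqrt{\ga_i}/\sqrt{\ga_i}\in L^{\infty}(\widetilde D)$, which is bounded a priori by $C$, so that $u$ solves $\divrg(\ga_i\nabla u)=0$ in $\widetilde D$ iff $\sqrt{\ga_i}\,u$ solves $(-\Delta+q_i)(\sqrt{\ga_i}u)=0$ there; the Schr\"odinger Dirichlet-to-Neumann maps $\Lambda_{q_i}$ are recovered from $\Lambda_{\ga_i}^{\p\widetilde D}$ by conjugation with the multiplier $\ga_i^{-1/2}|_{\p\widetilde D}$ together with the additive multiplier $\tfrac12\ga_i^{-1}\p_\nu\ga_i|_{\p\widetilde D}$, so the above estimate yields $\|\Lambda_{q_1}-\Lambda_{q_2}\|_{\Lcal(H^{1/2},H^{-1/2})}\le C\,\ve^{\sigma}$.

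\medskip\noindent Extending $q_i$ by zero to a ball $B\supset\supset\widetilde D$, I would then invoke the Sylvester--Uhlmann construction: for every $\zeta\in\C^n$ with $\zeta\cdot\zeta=0$ and $|\zeta|\ge C$ there is a solution $v=\rme^{\zeta\cdot x}(1+\psi_\zeta)$ of $(-\Delta+q_i)v=0$ in $B$ with $\|\psi_\zeta\|_{L^2(B)}\le C/|\zeta|$ (and corresponding $H^1$-bounds), the constants depending only on $\|q_i\|_{L^\infty}$. Given $\xi\in\R^n$ and a parameter $t\ge c|\xi|$, choose $\zeta_1,\zeta_2$ with $\zeta_j\cdot\zeta_j=0$, $\zeta_1+\zeta_2=\rmi\xi$ and $|\zeta_j|\sim t$ (possible since $n\ge 3$), and let $v_1,v_2$ be the corresponding CGO solutions for $q_1$ and $q_2$. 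Substituting into the Alessandrini-type identity $\int_{\widetilde D}(q_1-q_2)v_1v_2=\langle(\Lambda_{q_1}-\Lambda_{q_2})(v_1|_{\p\widetilde D}),v_2|_{\p\widetilde D}\rangle$ and expanding $v_1v_2=\rme^{\rmi\xi\cdot x}(1+\psi_{\zeta_1}+\psi_{\zeta_2}+\psi_{\zeta_1}\psi_{\zeta_2})$, the left-hand side equals $\widehat{(q_1-q_2)\chi_{\widetilde D}}(\xi)$ up to an error $\le C\|q_1-q_2\|_{L^\infty}/t\le C/t$, while the right-hand side is $\le\|\Lambda_{q_1}-\Lambda_{q_2}\|\,\|v_1|_{\p\widetilde D}\|_{H^{1/2}(\p\widetilde D)}\,\|v_2|_{\p\widetilde D}\|_{H^{1/2}(\p\widetilde D)}\le C\rme^{Ct}\ve^{\sigma}$. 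Thus $|\widehat{(q_1-q_2)\chi_{\widetilde D}}(\xi)|\le C(t^{-1}+\rme^{Ct}\ve^{\sigma})$ for all $t\ge c|\xi|$.

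\medskip\noindent Choosing $t$ a suitably small multiple of $|\log\ve|$ (so that $\rme^{Ct}\ve^{\sigma}\le\ve^{\sigma/2}$) gives $|\widehat{(q_1-q_2)\chi_{\widetilde D}}(\xi)|\le C|\log\ve|^{-1}$ for $|\xi|\le c|\log\ve|$; splitting $\|q_1-q_2\|^2_{H^{-a}(\widetilde D)}$ (for a fixed $a>n/2$) into the frequency ball $\{|\xi|\le c|\log\ve|\}$ and its complement --- estimating the complement by the a-priori bound $\|q_1-q_2\|_{L^2(\widetilde D)}\le C$ --- yields $\|q_1-q_2\|_{H^{-a}(\widetilde D)}\le C|\log\ve|^{-1}$. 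Finally, since $\phi_i:=\sqrt{\ga_i}$ solves $(-\Delta+q_i)\phi_i=0$ in $\widetilde D$, the difference $\phi_1-\phi_2$ solves an elliptic boundary value problem with source $(q_2-q_1)\phi_2$, small in a negative Sobolev norm, and boundary datum $\phi_1|_{\p\widetilde D}-\phi_2|_{\p\widetilde D}$, small in $L^\infty$ by the first step; the associated a-priori estimate makes $\phi_1-\phi_2$ small, say in $L^2(\widetilde D)$, and interpolating against the a-priori $W^{2,\infty}(\widetilde D)$-bound produces the $L^\infty(\widetilde D)$ estimate \eqnref{AA} with an exponent $\delta\in(0,1)$ depending only on $n$. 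The real work --- and the main obstacle --- is the quantitative book-keeping throughout: the boundary determination of the first step, the Sylvester--Uhlmann remainder bounds with constants controlled solely through $\|q_i\|_{L^\infty}$, and the choice of $a$ together with the final interpolation so that the bare $W^{2,\infty}$-regularity suffices; the single logarithm in $\omega$ is forced by the competition between the $O(1/t)$ CGO error and the $\rme^{Ct}$ amplification of the boundary data, and is optimal in view of Mandache's examples. (One may equally use the high-order singular solutions of \cite{asing} in place of the CGO solutions.)
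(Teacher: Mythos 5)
The paper does not actually prove Theorem \ref{A}: it is imported as a known result, with the proof delegated to the cited references \cite{asing} and \cite{A}. Your outline is a correct reconstruction of the classical argument contained in those references --- boundary determination, the Liouville reduction to a Schr\"odinger potential $q_i=\Delta\sqrt{\ga_i}/\sqrt{\ga_i}$, CGO (or high-order singular) solutions in the Alessandrini identity, the low/high frequency splitting with $t\sim|\log\ve|$ producing the single logarithm, and the final elliptic-plus-interpolation step to return from $q$ to $\ga$ --- so in substance it matches what the paper relies on. The only point you pass over silently is the solvability of the Dirichlet problem for $-\Delta+q_1$ in the last step (needed for the a-priori estimate on $\phi_1-\phi_2$): this is not automatic for an arbitrary bounded potential, but here it follows because $\sqrt{\ga_1}$ is a positive solution bounded above and below in terms of $\lambda$, which yields the maximum principle and well-posedness with constants depending only on the a-priori data; worth one line if you were to write this out in full.
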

Hence, assuming Theorem \ref{sub-main} proven, we can conclude as follows.

\begin{proof}[Proof of Theorem \ref{main}.]
Let us denote $\|\Lambda_{\ga_1}^{\Sigma}-\Lambda_{\ga_2}^{\Sigma}\|=\ve$. By \eqref{est-main}, we have
\begin{equation*}
\|\Lambda_{\ga_1}^{\p\widetilde D}-\Lambda_{\ga_2}^{\p\widetilde
D}\|_{\Lcal(H^{1/2}(\p\widetilde D), H^{-1/2}(\p\widetilde D))}\leq
C\varepsilon^{\beta}\end{equation*}
where, without loss of generality we may assume $C\geq 1$. If, on one hand, we have $C\varepsilon^{\beta}< e^{-1}$ then by \eqref{AA}
\begin{align*}
\|\ga_1-\ga_2\|_{L^{\infty}(\Om)}=\|\ga_1-\ga_2\|_{L^{\infty}(\widetilde D)}\leq \\ \leq C |\log (C \ve^{\beta})|^{-\delta} \leq C\left(\frac{\log Ce}{\beta}\right)^{\delta} |\log \ve|^{-\delta} \ .
\end{align*}
On the other hand, if $C\varepsilon^{\beta}\geq e^{-1}$ then, trivially,
\begin{align*}
\|\ga_1-\ga_2\|_{L^{\infty}(\Om)}\leq \lambda^{-1}\leq \\ \leq \lambda^{-1}\left(\frac{\log Ce}{\beta}\right)^{\delta} |\log \ve|^{-\delta}
\end{align*}
and the thesis follows.
\end{proof}

\section{Proof of Theorem \ref{sub-main}} \label{core}
Let us begin by observing that the reference conductivity $\ga_0$ can be extended  to $\widetilde \Om$ in such a way that the ellipticity condition \eqref{ellipticity} continues to hold in all of $\widetilde \Om$ and that the following Lipschitz bound holds
\begin{equation}\label{boundLip}
\|\ga_0\|_{W^{1,\infty}(\widetilde \Om)} \leq E_1
\end{equation}
where $E_1$ only depends on $E, \rho_0, M_0, d_0$. The same extension can be used for any $\ga$ satisfying \eqref{gammaknown}, and from now on we shall replace this assumption with the following one
\begin{equation}\label{gammaknownExt}
\gamma = \gamma_0 \mbox{ in  } \widetilde\Omega \setminus \overline{D} \ .
\end{equation}


Let $\ga_1, \ga_2$ be the two conductivities  appearing in the statement of Theorem \ref{sub-main},
 let us introduce the Green's function $G_i(x,y)$ , $i=1,2$, for the operator
for $\mathrm{div} (\ga_i\nabla)$ in the domain $\widetilde\Om$, that is, for any $y\in \widetilde\Om$,
$G_i(\cdot,y)$ is defined as the distributional solution to
\begin{equation*}\begin{cases}
\mathrm{div}_x(\ga_i(\cdot)\nabla_x G_i(\cdot,y))= - \delta(\cdot-y) \quad
\mbox{ in  }\widetilde\Om\ ,\\
G_i(\cdot,y)=0 \quad \mbox{ on }
\p\widetilde\Om\ .\end{cases}\end{equation*}
As is well-known, $G_i(x,y)$ is symmetric, it has a singularity on the diagonal $\left\{x=y\right\}$ of the order of $|x-y|^{2-n}$, and, away from the diagonal, it is $C^{1,\alpha}$-regular in each of the two variables $x,y$, moreover also the mixed derivatives $\nabla_x \nabla_y G_i(x,y)$ exist and are locally H\"older continuous away from the diagonal, see for instance \cite[Theorem 8.32]{GT}. In particular we shall make use of the following energy bound.

\begin{prop}\label{AV} For every $y\in\widetilde\Om$ and every $r>0$ we
have
\begin{equation}\label{est2}
\int_{\widetilde\Om\setminus B_r(y)}|\nabla_x G_i(x,y)|^2dx \leq
Cr^{2-n}\ ,
\end{equation}
where $C>0$ only depends on $\lambda$ and
$n$.\end{prop}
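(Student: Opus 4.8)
The plan is to derive the energy bound \eqref{est2} from a Caccioppoli-type inequality combined with the known pointwise size estimate on the Green's function. First I would fix $y\in\widetilde\Om$ and $r>0$, and recall that the Green's function satisfies, away from the pole, the standard bound $|G_i(x,y)|\leq C|x-y|^{2-n}$ with $C$ depending only on $\lambda$ and $n$ (this follows from De Giorgi--Nash--Moser theory, see \cite[Theorem 8.32]{GT} or the classical construction of Littman--Stampacchia--Weinberger). In fact the more useful form is that on the dyadic annulus $A_\rho=B_{2\rho}(y)\setminus B_\rho(y)$ one has $\sup_{A_\rho}|G_i(\cdot,y)|\leq C\rho^{2-n}$ for every $\rho>0$ for which this annulus meets $\widetilde\Om$.

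Next I would run the Caccioppoli estimate on each such annulus. Since $\mathrm{div}(\ga_i\nabla G_i(\cdot,y))=0$ in $\widetilde\Om\setminus\{y\}$ and $G_i(\cdot,y)=0$ on $\p\widetilde\Om$, testing the equation with $\zeta^2 G_i(\cdot,y)$, where $\zeta$ is a cutoff equal to $1$ on $B_{2\rho}(y)\setminus B_\rho(y)$, supported in $B_{4\rho}(y)\setminus B_{\rho/2}(y)$ with $|\nabla\zeta|\leq C/\rho$ (taking into account the homogeneous boundary condition so that no boundary terms appear), and using the ellipticity \eqref{ellipticity}, yields
\begin{equation*}
\int_{B_{2\rho}(y)\setminus B_\rho(y)}|\nabla_x G_i(x,y)|^2\,dx \leq \frac{C}{\rho^2}\int_{B_{4\rho}(y)\setminus B_{\rho/2}(y)}|G_i(x,y)|^2\,dx \leq C\rho^{-2}\cdot\rho^n\cdot\rho^{2(2-n)}=C\rho^{2-n}\ ,
\end{equation*}
where I used the pointwise bound on the enlarged annulus and that its volume is comparable to $\rho^n$; the constant depends only on $\lambda$ and $n$.

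Finally I would sum a dyadic decomposition: write $\widetilde\Om\setminus B_r(y)\subset\bigcup_{k\geq 0}\bigl(B_{2^{k+1}r}(y)\setminus B_{2^k r}(y)\bigr)$, apply the annular estimate with $\rho=2^k r$ to each piece, and sum the geometric series $\sum_{k\geq 0}(2^k r)^{2-n}=r^{2-n}\sum_{k\geq 0}2^{k(2-n)}$, which converges precisely because $n\geq 3$, giving $\int_{\widetilde\Om\setminus B_r(y)}|\nabla_x G_i|^2\leq Cr^{2-n}$ with $C=C(\lambda,n)$. I expect the only mildly delicate point to be bookkeeping near $\p\widetilde\Om$: if $B_{4\rho}(y)$ protrudes outside $\widetilde\Om$ one restricts all integrals to $\widetilde\Om$, and the homogeneous Dirichlet condition together with a cutoff that need not vanish at $\p\widetilde\Om$ keeps the Caccioppoli argument intact, since $\zeta^2 G_i(\cdot,y)\in H^1_0(\widetilde\Om\setminus\overline{B_{\rho/2}(y)})$ is still an admissible test function. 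This is the step that requires a little care, but it is routine.
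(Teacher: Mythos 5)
Your proposal is correct and follows exactly the route the paper indicates: the paper's proof is a one-line reference to a Caccioppoli-type inequality combined with the Littman--Stampacchia--Weinberger pointwise bound on the Green's function, deferring details to \cite[Proposition 3.1]{AV}, and your dyadic-annulus implementation (with the geometric series converging since $n\geq 3$) is precisely how those details are carried out. No discrepancy to report.
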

\begin{proof} A proof can be easily obtained through a Caccioppoli type inequality and the well-known pointwise upper bound of the Green's function \cite{LSW}, details can be found in \cite[Proposition 3.1]{AV}.
\end{proof}

Let us fix $\eta_i\in
C^{1,\alpha}(\p\widetilde D)$, $i=1,2$ for some $\alpha\in (0,1)$ and consider $v_i$ be solutions to the Dirichlet problems
\begin{equation*}\begin{cases}
\mathrm{div}(\ga_i\nabla v_i)=0 \quad \mbox{in  } \widetilde D \ ,\\
v_i=\eta_i \quad \mbox{on  } \p\widetilde D\ .\end{cases}\end{equation*}

By Green's formula, for every
$x\in \widetilde{D}$, we obtain
\begin{align*}
v_i(x) & = \int_{\p\widetilde D}\Big[\ga_i(z)\frac{\p
v_i}{\p\nu}(z)G_i(x,z)-v_i(z)\ga_i(z)\frac{\p
G_i}{\p\nu_z}(x,z)\Big]d\sigma_z\\
& = \int_{\p\widetilde D}\ga_0(z)\Big[\frac{\p
v_i}{\p\nu}(z)G_i(x,z)-v_i(z)\frac{\p
G_i}{\p\nu_z}(x,z)\Big]d\sigma_z\ .\end{align*}
Note that, by the regularity assumptions on the conductivities and on $\p\widetilde D$, $v_1, v_2$ are differentiable, in the classical sense, up to the boundary of $\widetilde{D}$ and that differentiation under the integral is elementarily allowed in the above formulas. Therefore, by Fubini's theorem,
for every $x\in D$
\begin{align} \label{innerp}
\nabla v_1(x)\ & \cdot \nabla v_2(x)= \nonumber \\
 = & \int_{\p\widetilde D \times \p\widetilde D}\ga_0(z)\ga_0(w)\frac{\p
v_1}{\p\nu}(z)\frac{\p v_2}{\p\nu}(w)\nabla_x G_1(x,z)\cdot \nabla_x
 G_2(x,w)d\sigma_z \times \sigma_w \nonumber \\
& -\int_{\p\widetilde D\times \p\widetilde D}\ga_0(z)\ga_0(w)\frac{\p
v_1}{\p\nu}(z)v_2(w)\nabla_x G_1(x,z) \cdot \nabla_x \frac{\p
G_2}{\p\nu_w}(x,w)d\sigma_z \times \sigma_w \nonumber \\
& -\int_{\p\widetilde D\times \p\widetilde D} \ga_0(z)\ga_0(w)v_1(z)\frac{\p
v_2}{\p\nu}(w)\nabla_x \frac{\p G_1}{\p\nu_z}(x,z)\cdot \nabla_x
G_2(x,w)d\sigma_z \times \sigma_w \nonumber \\
& + \int_{\p\widetilde D\times\p\widetilde D} \ga_0(z)\ga_0(w) v_1(z)
v_2(w)\nabla_x\frac{\p G_1}{\p\nu_z}(x,z)\cdot \nabla_x\frac{\p
G_2}{\p\nu_w}(x,w)d\sigma_z \times \sigma_w \end{align}
where $\sigma$ denotes the surface measure on $\p\widetilde D$.
For any $z,w \in \widetilde{\Om} \setminus \overline{D}$ let us
define
\begin{equation}\label{Sdef}
S(z,w)=\int_D (\ga_1(x)-\ga_2(x))\nabla_x G_1(x,z)\cdot \nabla_x
G_2(x,w)dx\ .\end{equation}
Note that, by Proposition \ref{AV}, such integral is well defined and in fact, if $z,w$ are such that $\mathrm{dist}(z, \p D), \mathrm{dist}(w, \p D) \geq R>0$ then we have
\begin{equation} \label{boundS}
|S(z,w)|\leq C R^{2-n}
\end{equation}
where $C>0$ only depends on $\lambda$ and $n$.
Then, by a well-known identity stemming from \eqref{bilinear}, we have
\begin{equation}
\langle (\Lambda_{\ga_1}^{\p\widetilde D}  -\Lambda_{\ga_2}^{\p\widetilde
D})\eta_1,\eta_2\rangle =   \int_{\widetilde D}(\ga_1-\ga_2)\nabla v_1\cdot \nabla v_2
dx
\end{equation}
consequently, by \eqref{gammaknownExt}, \eqref{innerp}, \eqref{Sdef}
\begin{align}\label{eqlambda}
 \langle (\Lambda_{\ga_1}^{\p\widetilde D}  -\Lambda_{\ga_2}^{\p\widetilde
D})\eta_1,\eta_2\rangle = \int_D(\ga_1-\ga_2)\nabla v_1 \cdot \nabla v_2 dx =\nonumber \\& = I_1 - I_2 - I_3 +I_4
\end{align}
where we denote
\begin{align}
& I_1= \int_{\p\widetilde D \times \p\widetilde D}\ga_0(z)\ga_0(w)\frac{\p
v_1}{\p\nu}(z)\frac{\p v_2}{\p\nu}(w)S(z,w)d\sigma_z \times \sigma_w\ , \label{eq1}\\
& I_2= \int_{\p\widetilde D \times \p\widetilde D}\ga_0(z)\ga_0(w)\frac{\p
v_1}{\p\nu}(z)v_2(w)\frac{\p}{\p\nu_w}S(z,w)d\sigma_z \times \sigma_w\ , \label{eq2}\\
& I_3 = \int_{\p\widetilde D \times \p\widetilde D} \ga_0(z)\ga_0(w)v_1(z)\frac{\p
v_2}{\p\nu}(w)\frac{\p}{\p\nu_z}S(z,w)d\sigma_z \times \sigma_w\ ,\label{eq3}\\
& I_4 = \int_{\p\widetilde D \times \p\widetilde D} \ga_0(z)\ga_0(w) v_1(z)
v_2(w)\frac{\p}{\p\nu_z}\frac{\p}{\p\nu_w}S(z,w)d\sigma_z \times \sigma_w\ .\label{eq4}\end{align}
For $z,w\in \mathcal{A}$,  the Green's functions
$G_1(\cdot,z) , G_2(\cdot,w) $ have no singularity in $\Om$ and also $G_1(\cdot,z)|_{\p\Om}, G_2(\cdot,w)|_{\p\Om}\in H_{co}^{1/2}(\Sigma)$.
More specifically, if $z,w \in B_{\rho_1}(Q)$, then, by \eqref{est2},
\begin{equation*}
\|G_1(\cdot,z)\|_{H_{co}^{1/2}(\Sigma)}, \|G_2(\cdot,w)\|_{H_{co}^{1/2}(\Sigma)} \leq C
\end{equation*}
where $C$ only depends on $\lambda, \rho_0, M_0, d_0$  and $n$.
Thus, recalling \eqref{bilinloc}, for $z,w\in B_{\rho_1}(Q)$, $S(z,w)$ can be rewritten as follows
\begin{align*}
S(z,w)&=\int_D (\ga_1(x)-\ga_2(x))\nabla_x G_1(x,z)\cdot \nabla_x
G_2(x,w)dx=\\
& = \int_{\Om} (\ga_1(x)-\ga_2(x))\nabla_x G_1(x,z)\cdot \nabla_x
G_2(x,w)dx=\\
& = \langle
(\Lambda_{\ga_1}^{\Sigma}-\Lambda_{\ga_2}^{\Sigma})G_1(\cdot,z),
G_2(\cdot,w)\rangle\ .\end{align*}
Hence,
\begin{equation}\label{errorboundS}
|S(z,w)|  \leq C \varepsilon \mbox{ for every } z,w\in B_{\rho_1}(Q)
\end{equation}
where we denote
\begin{equation} \label{epsilon}
\varepsilon =\|\Lambda_{\ga_1}^{\Sigma}-\Lambda_{\ga_2}^{\Sigma}\|_{\Lcal(H_{co}^{1/2}(\Sigma), H_{co}^{-1/2}(\Sigma))}  \ .
\end{equation}
On the other hand, recalling \eqref{boundS}, the following bound holds
\begin{equation} \label{boundDprime}
|S(z,w)|\leq C  \mbox{ for every } z, w \in \widetilde{\Om} \setminus \overline{D^{\prime}}
\end{equation}
where $C>0$ only depends on $\lambda, \rho_0, M_0, d_0$ and $n$.
Moreover, we have the following.
\begin{prop} \label{equationsforS}
For every $w\in \widetilde{\Om} \setminus \overline{D}$, the functions $S(\cdot,w), \frac{\p}{\p w_i}S(\cdot,w), i=1,\ldots,n$ are weak solutions to the elliptic equation
\begin{equation}\label{eqforS}
\mathrm{div}(\gamma_0 \nabla v) = 0 \mbox{ in } \widetilde{\Om} \setminus \overline{D} \ ,
\end{equation}
likewise, for every $z\in \widetilde{\Om} \setminus \overline{D}$, the functions $S(z,\cdot), \frac{\p}{\p z_i}S(z,\cdot), i=1,\ldots,n$ are weak solutions to the same equation.
\end{prop}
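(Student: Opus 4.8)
The plan is to exploit that $S(z,w)$ is, for fixed $w\in\widetilde\Om\setminus\overline D$, a superposition over $x\in D$ of the functions $z\mapsto \nabla_x G_1(x,z)$ with a weight depending on $x$ only, together with the observation that each such function solves \eqref{eqforS}. The same structure, with the roles of the two variables (and of $G_1,G_2$) interchanged, handles the statements about the second variable.

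First I would record the key observation. By the symmetry of $G_1$ and its defining property, for fixed $x$ the function $z\mapsto G_1(x,z)$ is the distributional solution of $\mathrm{div}_z(\gamma_1(z)\nabla_z G_1(x,z))=-\delta(z-x)$ in $\widetilde\Om$ with zero boundary values; testing against $\chi\in C_c^\infty(\widetilde\Om\setminus\overline D)$ and using that $x\in D$ together with $\gamma_1\equiv\gamma_0$ on $\widetilde\Om\setminus\overline D$ (see \eqref{gammaknownExt}) yields $\int_{\widetilde\Om}\gamma_0(z)\nabla_z G_1(x,z)\cdot\nabla_z\chi(z)\,dz=0$ for every $x\in D$. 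Differentiating this identity in the parameter $x_i$ by difference quotients is legitimate because, for $x\in D$ and $z$ in the compact set $\mathrm{supp}\,\chi\subset\widetilde\Om\setminus\overline D$, both $\nabla_z G_1(x,z)$ and $\nabla_x\nabla_z G_1(x,z)$ are continuous and bounded, being evaluated away from the diagonal (cf.\ the regularity of the Green's functions recalled before Proposition \ref{AV}); it shows that $z\mapsto\partial_{x_i}G_1(x,z)$ is likewise a weak solution of \eqref{eqforS} in $\widetilde\Om\setminus\overline D$, for every $x\in D$ and $i=1,\dots,n$. The analogous statements hold with $G_2$, and with the second slot of each Green's function in the role of the first, since $\gamma_2\equiv\gamma_0$ outside $D$ as well.

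Next I would write each of the four functions in the statement, in its free variable $z$, as $z\mapsto\sum_{i=1}^n\int_D F_i(x)\,\partial_{x_i}G_1(x,z)\,dx$ with $F=(F_1,\dots,F_n)\in L^\infty(D;\R^n)$ independent of $z$: for $S(\cdot,w)$ take $F(x)=(\gamma_1-\gamma_2)(x)\,\nabla_x G_2(x,w)$, and for $\frac{\p}{\p w_i}S(\cdot,w)$ take $F(x)=(\gamma_1-\gamma_2)(x)\,\nabla_x\partial_{w_i}G_2(x,w)$; the differentiation under the integral sign in $w_i$, and the membership of the result in $C^1_{loc}(\widetilde\Om\setminus\overline D)\subset H^1_{loc}(\widetilde\Om\setminus\overline D)$ as a function of $z$, both follow from the local boundedness of $\nabla_x G_2$, $\nabla_x\nabla_w G_2$ and $\nabla_x\nabla_z G_1$ away from the diagonal. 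Then, for a test function $\psi\in C_c^\infty(\widetilde\Om\setminus\overline D)$, differentiating under the integral and applying Fubini's theorem (permissible since the integration runs over the bounded set $D\times\mathrm{supp}\,\psi$, on which the relevant derivatives of $G_1$ are bounded) gives
\[
\int_{\widetilde\Om}\gamma_0(z)\,\nabla_z\Big(\sum_i\int_D F_i(x)\,\partial_{x_i}G_1(x,z)\,dx\Big)\cdot\nabla_z\psi(z)\,dz=\sum_i\int_D F_i(x)\Big(\int_{\widetilde\Om}\gamma_0(z)\,\nabla_z\big[\partial_{x_i}G_1(x,z)\big]\cdot\nabla_z\psi(z)\,dz\Big)dx,
\]
and the inner integral vanishes for every $x\in D$ by the first step; hence the function is a weak solution of \eqref{eqforS} in $\widetilde\Om\setminus\overline D$. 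The statements for $S(z,\cdot)$ and $\frac{\p}{\p z_i}S(z,\cdot)$ are obtained in exactly the same way, interchanging the roles of the two variables and of $G_1,G_2$ and using that $\gamma_2\equiv\gamma_0$ outside $D$.

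The only delicate point is the justification of the two differentiations under the integral sign and of the Fubini interchange; all of this reduces to the joint regularity of the Green's functions — existence, local H\"older continuity, hence local boundedness, of $G_i$ and of $\nabla_x\nabla_y G_i$ away from the diagonal — together with the structural fact that $\gamma_0$ depends on $z$ alone, so that no control on $\gamma_0$ beyond the ellipticity \eqref{ellipticity} is needed in order to differentiate the weak formulation with respect to the parameter.
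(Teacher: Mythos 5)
Your proposal is correct and follows essentially the same route as the paper's (deliberately terse) proof: one verifies the weak formulation of \eqref{eqforS} against an arbitrary test function supported in $\widetilde\Om\setminus\overline{D}$ by differentiation under the integral sign and Fubini's theorem, with \eqref{gammaknownExt} guaranteeing that the Green's functions themselves solve the $\gamma_0$-equation away from $\overline{D}$. The paper leaves all of these details to the reader; you have supplied them, correctly, including the regularity of $\nabla_x\nabla_y G_i$ off the diagonal needed to justify the interchanges.
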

\begin{proof}
It suffices to verify the weak formulation of \eqref{eqforS} with an arbitrary test function $\psi \in C^{\infty}_0(\widetilde{\Om} \setminus \overline{D})$. This follows in a straightforward fashion, by repeated use of differentiation under the integral and of Fubini's theorem. Note that use is made of the assumption \eqref{gammaknownExt}.
\end{proof}

Using the fact that $S$ solves an elliptic equation in each variable, we can combine the smallness estimate \eqref{errorboundS} with the global bound \eqref{boundDprime}, so to estimate the smallness of $S$ and its derivatives on $\p\widetilde D \times \p\widetilde D$. This task can be achieved by an estimate of propagation of smallness, for a general discussion on this concept we refer to \cite{IP}. Let us fix $h_1 <\rho_2/2$ only depending on $\rho_0, M_0$ such that $(\widetilde{\Om} \setminus \overline{D^{\prime}})_{h_1}$ is connected.

\begin{prop}\label{propsmall}
If $v$ is a weak solution to
\eqref{eqforS} then
\begin{equation}\label{eq:propsmall}
\|v\|_{L^2((\widetilde{\Om} \setminus \overline{D^{\prime}})_{h_1})}\leq C \|v\|^{\eta}_{L^2(B_{\rho_1}(Q))}\|v\|^{1-\eta}_{L^2(\widetilde{\Om} \setminus \overline{D^{\prime}})}
\end{equation}
where $C>0, \eta\in(0,1)$ only depend on the a-priori data $\lambda, E, \rho_0, M_0, d_0, \mathrm{diam}(\Omega)$ and on $n$.
\end{prop}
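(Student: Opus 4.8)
The plan is to invoke a standard three-sphere / propagation-of-smallness argument for weak solutions of the uniformly elliptic equation \eqref{eqforS} in the connected open set $\widetilde\Om \setminus \overline{D^{\prime}}$. First I would recall that, by the regularity bound \eqref{boundLip} on $\ga_0$, equation \eqref{eqforS} is a uniformly elliptic equation with Lipschitz coefficients, so the classical quantitative unique continuation machinery applies: solutions satisfy a three-sphere inequality with exponents depending only on $\lambda, E_1, n$. Concretely, for any ball $B_{4r}(x_0) \subset \widetilde\Om \setminus \overline{D^{\prime}}$ one has an estimate of the form $\|v\|_{L^2(B_{2r}(x_0))} \le C \|v\|_{L^2(B_r(x_0))}^{\theta}\|v\|_{L^2(B_{3r}(x_0))}^{1-\theta}$ with $C, \theta$ depending only on the a-priori data. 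This is exactly the ingredient referenced in \cite{IP}, so I would cite it rather than reprove it.

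Next, the core of the argument is a chain-of-balls construction. Since $(\widetilde\Om \setminus \overline{D^{\prime}})_{h_1}$ is connected (by the choice of $h_1$) and is, by construction, at distance at least $h_1$ from $\p(\widetilde\Om \setminus \overline{D^{\prime}})$, one can cover it by finitely many balls of a fixed radius comparable to $h_1$, all contained well inside $\widetilde\Om \setminus \overline{D^{\prime}}$, and connect any such ball to a fixed ball centered at $Q$ (recall $B_{\rho_1}(Q) \subset \widetilde\Om_h$ for $h \le \rho_1$, and the mutual-distance condition (iii) guarantees $B_{\rho_1}(Q)$ sits inside $\widetilde\Om \setminus \overline{D^{\prime}}$) by a path of overlapping balls whose number $N$ is bounded in terms of $\mathrm{diam}(\Omega)/h_1$ and hence in terms of the a-priori data. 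Iterating the three-sphere inequality along this chain, and bounding every factor of the "outer" norm by the global quantity $\|v\|_{L^2(\widetilde\Om \setminus \overline{D^{\prime}})}$, yields a pointwise-on-balls estimate $\|v\|_{L^2(B)} \le C \|v\|_{L^2(B_{\rho_1}(Q))}^{\eta}\|v\|_{L^2(\widetilde\Om \setminus \overline{D^{\prime}})}^{1-\eta}$ with $\eta = \theta^{N}$ depending only on the a-priori data. Summing over the finitely many balls covering $(\widetilde\Om \setminus \overline{D^{\prime}})_{h_1}$ and adjusting constants gives \eqref{eq:propsmall}.

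The main obstacle — and the point that requires some care rather than being purely routine — is the bookkeeping of the geometry: one must be sure that the chain of balls connecting an arbitrary point of $(\widetilde\Om \setminus \overline{D^{\prime}})_{h_1}$ to $B_{\rho_1}(Q)$ can be taken with radii and overlaps controlled only by the a-priori constants, and with the total number $N$ of balls bounded, uniformly over all admissible domains $\widetilde\Om, D^{\prime}$. This is where the connectedness of the "shrunk" domain $(\widetilde\Om \setminus \overline{D^{\prime}})_{h_1}$ and the quantitative Lipschitz character of all the boundaries involved (with constants $\rho_1, M_1, \rho_2, M_2$ depending only on $\rho_0, M_0, d_0$) are essential; the relevant abstract statement, that such chains exist with controlled length, is precisely the content of the propagation-of-smallness discussion in \cite[Section~5]{IP}, which I would quote. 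The only mild subtlety beyond that citation is to ensure the same $\eta$ works simultaneously for $v = S(\cdot,w)$ and its first derivatives, but since all of these solve the \emph{same} equation \eqref{eqforS} by Proposition~\ref{equationsforS}, a single application of the above with a uniform $\eta$ suffices.
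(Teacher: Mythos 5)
Your proposal is correct and matches the paper's approach: the paper's entire proof is a citation of \cite[Theorem 5.1]{IP}, of which the proposition is a special case, and your sketch (three-sphere inequality plus a chain-of-balls argument with length controlled by the a-priori data) is precisely the standard content of that result, which you also ultimately defer to \cite{IP} for. The extra care you note about the geometry and about the derivatives of $S$ solving the same equation is consistent with how the paper uses the proposition.
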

\begin{proof} We refer to \cite[Theorem 5.1]{IP}, of which this proposition is just a special case.
\end{proof}

Applying Proposition \ref{propsmall} to $v=S(\cdot,w)$, for any $w\in B_{\rho_1}(Q)$ we obtain
\begin{equation}\label{errorpropzS}
\|S(\cdot,w)\|_{L^2((\widetilde{\Om} \setminus \overline{D^{\prime}})_{h_1})}  \leq C \varepsilon^{\eta} \mbox{ for every } w\in B_{\rho_1}(Q)
\end{equation}
and by a further application of Proposition \ref{propsmall}, with respect to the $w$ variable, we have
\begin{equation}\label{errorpropzwS}
\|S(\cdot,\cdot)\|_{L^2((\widetilde{\Om} \setminus \overline{D^{\prime}})_{h_1}\times (\widetilde{\Om} \setminus \overline{D^{\prime}})_{h_1})}  \leq C \varepsilon^{\eta^2} \ .
\end{equation}
Using the elliptic equation for $S(\cdot,w)$ and the fact that $\p\widetilde D$ is contained in $(\widetilde{\Om} \setminus \overline{D^{\prime}})_{h_1}$  and at a distance greater than $\rho_2/2$ from $\partial (\widetilde{\Om} \setminus \overline{D^{\prime}})_{h_1}$, by standard interior regularity estimates \cite[Theorems 8.24, 8.32]{GT} we deduce
\begin{equation}\label{errorpropgradzS}
\|S(z,\cdot)\|_{L^2((\widetilde{\Om} \setminus \overline{D^{\prime}})_{h_1})} + \|\nabla_zS(z,\cdot)\|_{L^2((\widetilde{\Om} \setminus \overline{D^{\prime}})_{h_1})}\leq C \varepsilon^{\eta^2} \mbox{ for every } z \in \p\widetilde D \ ,
\end{equation}
using now the equation for $S(z,\cdot)$ and its first order $z$-derivatives, the interior regularity estimates give
\begin{align}\label{errorpropgradzwS}
\|S\|_{L^{\infty}(\p\widetilde D\times \p\widetilde D)} + \|\nabla_zS\|_{L^{\infty}(\p\widetilde D\times \p\widetilde D)}+\nonumber &\\ \|\nabla_wS\|_{L^{\infty}(\p\widetilde D\times \p\widetilde D)}+ \|\nabla_z\nabla_wS\|_{L^{\infty}(\p\widetilde D\times \p\widetilde D)}\leq \nonumber &\\&\leq C \varepsilon^{\beta}
\end{align}
where we denote $\beta=\eta^2$. Let us now combine the above bounds with \eqref{eqlambda}--\eqref{eq4}. For instance, we write
\begin{equation*}
I_1= \langle \Lambda_{\ga_1}^{\p\widetilde D} \eta_1, g\rangle
\end{equation*}
where
\begin{equation*}
g(z)=\langle \Lambda_{\ga_2}^{\p\widetilde D} \eta_2, S(z,\cdot)\rangle \ ,
\end{equation*}
hence, with some crude majorization,
\begin{align*}
|I_1| \leq C\| \eta_1\|_{H^{1/2}(\p\widetilde D)}\| g\|_{H^{1/2}(\p\widetilde D)}& \leq \\
\leq & C\| \eta_1\|_{H^{1/2}(\p\widetilde D)}\left( \| g\|_{L^{\infty}(\p\widetilde D)}+  \| \nabla g\|_{L^{\infty}(\p\widetilde D)} \right) \ .
\end{align*}
By the same reasoning and by \eqref{errorpropgradzwS} we also have
\begin{equation*}
\left( \| g\|_{L^{\infty}(\p\widetilde D)}+  \| \nabla g\|_{L^{\infty}(\p\widetilde D)} \right) \leq C\|\eta_2\|_{H^{1/2}(\p\widetilde D)} \ve^{\beta} \ ,
\end{equation*}
and consequently
\begin{equation*}
|I_1| \leq  C\ve^{\beta}\|\eta_1\|_{H^{1/2}(\p\widetilde
D)}\|\eta_2\|_{H^{1/2}(\p\widetilde D)} \ .
\end{equation*}
Using a similar approach for the terms in \eqref{eq2}--\eqref{eq4} we arrive at
\begin{equation}
|\langle(\Lambda_{\ga_1}^{\p\widetilde D}-\Lambda_{\ga_2}^{\p\widetilde
D})\eta_1,\eta_2\rangle| \leq
C\ve^{\beta}\|\eta_1\|_{H^{1/2}(\p\widetilde
D)}\|\eta_2\|_{H^{1/2}(\p\widetilde D)} \ , \end{equation}
for any Dirichlet data
$\eta_i\in
C^{1,\alpha}(\p\widetilde D)$, $i=1,2$, and being $C^{1,\alpha}(\p\widetilde D)$ dense into $H^{1/2}(\p\widetilde D)$, the proof of \eqref{est-main} and of Theorem \ref{sub-main} is complete.

\end{document}